\newtheorem{theorem}{Theorem}
\newtheorem{lemma}{Lemma}
\newtheorem{problem}{Problem}
\theoremstyle{definition}
\newcommand{\ostar}{\mathbin{\mathpalette\make@circled\star}}
\newcommand{\make@circled}[2]{%
  \ooalign{$\m@th#1\smallbigcirc{#1}$\cr\hidewidth$\m@th#1#2$\hidewidth\cr}%
}
\newcommand{\smallbigcirc}[1]{%
  \vcenter{\hbox{\scalebox{0.77778}{$\m@th#1\bigcirc$}}}%
}
\title{Decomposing a triangle-free planar graph into a forest and a subcubic forest}
\author{Carl Feghali\thanks{Computer Science Institute of Charles University, Prague, Czech Republic, and Univ Lyon, EnsL, CNRS, LIP, F-69342, Lyon Cedex 07, France email: \texttt{feghali.carl@gmail.com} }\and
Robert \v{S}\'amal\thanks{Computer Science Institute of Charles University, Prague, Czech Republic, email: \texttt{samal@iuuk.mff.cuni.cz} }}
\date{}
\begin{document}
\maketitle

\begin{abstract}
We strengthen a result of Dross, Montassier and Pinlou (2017) that the vertex set of every triangle-free planar graph can be decomposed into a set that induces a forest  and a set that induces a forest with maximum degree at most $5$, showing  that $5$ can be replaced by $3$. 
 \end{abstract}


The subject of this paper is the following problem, posed by Dross, Montassier and Pinlou \cite{dross}. 

\begin{problem}\label{problem}
Find the smallest integer $d \geq 0$ such that that the vertex set of every triangle-free planar graph can be decomposed into a set that induces a forest with maximum degree at most $d$ and a set that induces a forest. 
\end{problem}

In their same paper, they showed, via the discharging method, that $d \leq 5$. It is worth mentioning that their result is tight in the sense that there are triangle-free planar graphs that cannot be decomposed into two subgraphs, each having bounded maximum degree~\cite{montassier}. 

The analogue of Problem \ref{problem} for planar graphs with girth at least five was settled by Borodin and Glebov \cite{borodin1}, showing that $d=0$. Later on, Kawarabayashi and Thomassen \cite{thomassen2} gave another proof of this result using Thomassen's list color method \cite{thomassen1}. In that same paper, Kawarabayashi and Thomassen had already asked whether $d = 0$ in Problem~\ref{problem} is possible; if true, it would be a strengthening and a new proof  of a fundamental theorem of Gr\"otzsch \cite{gr} that every triangle-free planar graph admits a proper $3$-coloring. 

 In this paper, we use a list color method similar to Kawarabayashi and Thomassen \cite{thomassen2}  to address Problem \ref{problem}, by  improving the aforementioned best known upper bound of $5$ on $d$ to $3$.

\begin{theorem}\label{thm:main}
Every triangle-free planar graph can be decomposed into a forest and a forest with maximum degree at most three. 
\end{theorem}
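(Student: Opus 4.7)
The plan is to use Thomassen's list-coloring method, as advertised in the introduction. The main combinatorial step is to formulate an appropriate strengthening of Theorem~\ref{thm:main} that is amenable to induction. Concretely, I would try a statement of the following form:

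\emph{Let $G$ be a triangle-free plane graph whose outer face is bounded by a cycle $C$, and suppose we are given a partial labeling of some short path of $C$ into classes $A$ (``forest'') and $B$ (``subcubic forest'') that is already consistent: the $A$-labeled vertices induce a forest, the $B$-labeled vertices induce a subcubic forest, and no $B$-labeled vertex on the outer face is already saturated with three $B$-neighbors. Then this partial labeling extends to all of $V(G)$ with $G[A]$ a forest and $G[B]$ a forest of maximum degree at most~$3$.}

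The proof proceeds by induction on $|V(G)| + |E(G)|$. Given a minimum counterexample $G$, first reduce to the case that $G$ is $2$-connected with no chord in its outer cycle, since any chord or cut vertex allows one to split $G$ and apply induction to each piece after propagating a precoloring across the cut. Next, rule out short separating cycles: a separating $4$- or $5$-cycle bounds a smaller inside region that, once labeled by induction, yields a labeling on the separator serving as precoloring for the outside. Then rule out reducible configurations in the interior---typically a vertex of degree~$2$, and certain degree-$3$ vertices with all neighbors on the outer face---by deleting the vertex, applying induction, and extending the labeling from $\{A,B\}$. Finally, with these structural restrictions in hand, a short discharging or Euler-formula argument (using triangle-freeness, so every face has length at least $4$) should show that the remaining graph cannot exist.

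The hardest part, I expect, is accommodating the maximum-degree-$3$ constraint on $B$. In pure forest-decomposition proofs \`a la Gr\"otzsch, the only global obstruction to labeling a vertex $v$ is creating a monochromatic cycle, a local property of the component structure near $v$. The bounded-degree condition introduces a new obstruction: a $B$-vertex with three $B$-neighbors already placed is permanently blocked from accepting further $B$-neighbors, and this must be tracked when defining which precolorings count as consistent. Getting the formulation to admit both a clean base case and the reducible configurations above is where the real work will go; naively marking saturated vertices may force the introduction of a third ``frozen'' label and a correspondingly larger set of cases to analyse when deleting a boundary vertex.
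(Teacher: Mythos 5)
Your plan correctly identifies the overall shape of the argument---a Thomassen-style induction with a precolored boundary segment, reduction to the $2$-connected chordless case, and elimination of separating $4$-cycles---and this matches the skeleton of the paper's proof. But the plan stops exactly where the actual difficulty begins, and the missing pieces are not routine. The paper's inductive object is a ``scene'' $(G,P,Q,\delta)$ in which, besides a precolored set $P$ of at most \emph{two} boundary vertices, there is an independent set $Q$ of boundary vertices that are \emph{forced into the forest class}. This set $Q$ is the engine of the whole proof: the final reduction deletes a boundary vertex $x$ (or two consecutive ones), assigns it to the subcubic class, and places its remaining neighbors into $Q$, so that $x$ ends up with at most two same-class neighbors (its neighbors on the outer cycle) and cannot close a monochromatic cycle. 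Nothing in your plan plays this role; ``delete a degree-$2$ vertex and extend'' does not by itself control the degree of the deleted vertex in the subcubic part, nor does it prevent cycles through it.

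Second, your proposed consistency condition (``no $B$-labeled boundary vertex is already saturated with three $B$-neighbors'') is too weak to survive the gluing steps. The paper maintains stronger invariants: (G2) forbids any non-precolored neighbor of a $1$-precolored vertex from also receiving color $1$; (G4) caps outer-face vertices of color $1$ at \emph{two} same-colored neighbors, reserving a slot for the neighbor they may acquire when they later sit on a cut; and (G3) forbids certain monochromatic paths between specified boundary vertices, which is precisely what guarantees acyclicity of the forest class when two pieces are glued along a path or across a separating $4$-cycle. Your plan has no analogue of (G3), and without it the step ``propagate a precoloring across the cut and apply induction to each piece'' can create a monochromatic cycle in the forest part. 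Finally, the paper's endgame is not a discharging or Euler-formula argument: once the structural lemmas are in place, one takes four consecutive vertices $s,x,z,t$ on the outer cycle and performs a direct deletion argument, splitting into cases according to which of them lie in $Q$. Your closing paragraph concedes that formulating the right inductive statement is ``where the real work will go''; that work is the proof, and it is absent here.
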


We finally mention the related problem that asks to find the smallest integers $d, k \geq 0$ such that the vertex set of every planar graph can be decomposed into a set that induces a $d$-degenerate graph and a  set that induces a $k$-degenerate graph. This was settled by Thomassen \cite{thomassen3, thomassen} and leads to new proofs of the $5$-color theorem for planar graphs. Towards a generalization, an outstanding question due to Borodin \cite{borodin2} from 1976 asks whether every planar graph has a $5$-coloring such that for $1 \leq k \leq 5$ any $k$ color classes induce a $(k-1)$-degenerate graph.  To keep this paper short, we refer the reader to \cite{cranston, dross, thomassen2} for further details surrounding Problem \ref{problem},  its variants, generalizations and implications. 

\section{The proof of Theorem \ref{thm:main}}

In this section, we prove Theorem \ref{thm:main}.  We start with some definitions. 

A \emph{scene} $S=(G,P, Q, \delta)$ consists of
\begin{itemize}
\item a connected triangle-free plane graph $G$ and a (possibly empty) subset $P \subseteq V(G)$, with $G[P]$ consisting of 
  vertices consecutive in $V(K)$, the boundary of the outer face~$K$ of $G$,
\item a (possibly empty) subset $Q \subseteq V(K) \setminus P$ that induces an independent set {\bf\boldmath in $G$}, and
\item a (not necessarily proper) coloring $\delta: P \rightarrow \{1, 2\}$. 
\end{itemize}

A (not necessarily proper) coloring $\varphi: V(G) \rightarrow \{1, 2\}$ is \emph{valid} for a scene $S = (G, P, Q, \delta)$ if $\varphi \restriction P = \delta$, $\varphi(Q) = \{2\}$ and
\begin{itemize}
\item[(G1)] $\varphi^{-1}(1)$ induces a forest with maximum degree at most three and $\varphi^{-1}(2)$ induces a forest;
\item[(G2)] if $\delta(v) = 1$ for some $v \in P$ and $uv \in E(G)$ for some $u \in V(G) \setminus P$, then $\varphi(u) \not=1$;
\item[(G3)] if $|P| \leq 2$, $\delta(v) = 1$ for \emph{exactly} one vertex $v \in P$ and $uv \in E(G)$ for some $u \in Q$, 
  then for every $w \in N_G(v) \cap V(K) \setminus \{u\}$ there is no $\varphi$-monochromatic path with ends $u$ and~$w$;
\item[(G4)] if $\varphi(v) = 1$ for some $v \in V(K) \setminus P$, then $v$ has at most two neighbors in $\varphi^{-1}(1)$. 
\end{itemize} 

The following lemma will be used repeatedly. 

\begin{lemma}\label{obs}
  Let $S = (G, P, Q, \delta)$ be a scene. Suppose $G = G_1 \cup G_2$ for induced subgraphs $G_1$ and $G_2$ of $G$ with $E(G) = E(G_1) \cup E(G_2)$. 
  Suppose further that $P \subseteq V(G_1)$ and $H = G_1 \cap G_2$ is a path of length at most $1$ or of length $2$ and with one end in $Q$ or a cycle.  
  Suppose also that $H$ is contained in the boundary of the outer face of~$G_2$ and that 
$S_1 = (G_1, P, Q \cap V(G_1), \delta)$ has a valid coloring $\varphi_1$ and
  \[
S_2=\left\{\begin{array}{ll}(G_2, V(H) \setminus Q, Q \cap  V(G_2), \varphi_1 \restriction (H - Q)), & |V(H)| \leq 2\\ (G_2, V(H) \setminus \{s\}, (Q \cap  V(G_2))\setminus \{t\}, \varphi_1 \restriction (H - s)),& |V(H)| = 3 \mbox{ with } H = sxt \mbox{ and } s \in Q\\
(G_2, V(H), (Q \cap  V(G_2)) \setminus V(H), \varphi_1 \restriction H)
, & \mbox{ $H$ is a cycle}\end{array}\right.
\]
has a valid coloring~$\varphi_2$.

Put $\varphi = \varphi_1 \cup \varphi_2$. Then $\varphi$ with~$S$ satisfy (G2) and~(G4). Moreover, if in addition,
\begin{itemize}
  \item[\normalfont{($\star$)}] $H$ is a path and $H - q$ is $\varphi_1$-monochromatic (or possibly empty) for some vertex $q$ that is an end-vertex of $H$, 
\end{itemize}
then $\varphi$ also satisfies (G1) and~(G3) and is thus valid for~$S$. 
\end{lemma}

\begin{proof}
  In the proof we will use $K$ for the outer face of~$G$, and $K_i$ for the outer face of~$G_i$ (for $i=1, 2$). 
  To show that $\varphi$ satisfies (G1) if ($\star$) holds, we first show that $\varphi^{-1}(2) = \varphi_1^{-1}(2) \cup \varphi_2^{-1}(2)$ induces a forest. This follows as $\varphi_i^{-1}(2)$ induces a
  forest for $i = 1, 2$; moreover, by ($\star$) these two forests intersect in  a path. For $\varphi^{-1}(1)$ we argue similarly, but we also show, independently of ($\star$), that the degrees of vertices
  in $\varphi_1^{-1}(1) \cap \varphi_2^{-1}(1)$ are upper bounded by~$3$. (For other vertices of~$\varphi_1^{-1}(1) \cup \varphi_2^{-1}(1)$ it follows directly by assumption.) 
  For $i \in \{1, 2\}$ let $F_i$ denote the forest induced by $\varphi_i^{-1}(1)$, and let us consider an arbitrary vertex 
  $y \in V(F_1) \cap V(F_2)$. Then by (G1) applied to $S_1$, $\mbox{deg}_{F_1}(y) \leq 3$, and by (G2) applied to $S_2$, 
  together with the fact that $G_1$ is an induced graph, there is no neighbor of~$y$ in $V(F_2) \setminus V(F_1)$. 

  To show (G4) consider a vertex $v \in V(K)$. Observe that either $v \in V(K_1)$ or $v \in V(K_2) \setminus V(H)$ or $v \in V(H)$. 
  The first two cases follow by assumption, so consider $v \in V(H)$. 
  Recall the definition of $F_1$, $F_2$ from the previous paragraph. 
  We are bounding the degree of a vertex $v \in V(F_1) \cap V(F_2) \cap V(K)$. Then by (G4) applied to $S_1$, $\mbox{deg}_{F_1}(v) \leq 2$, and by (G2) applied to $S_2$,
  together with the fact that $G_1$ is an induced graph, there is no neighbor of~$v$ in $V(F_2) \setminus V(F_1)$, as needed.  

  We now show that $\varphi$ satisfies (G2). Consider a vertex $v \in P$ with $\delta(v) = 1$. Observe by (G2) applied to $S_1$ that $\varphi(u) = 2$ 
  for each $u \in N_{G_1}(v) \setminus V(P)$. Moreover, if $v \in V(H)$, then by (G2) applied to $S_2$ we also have $\varphi(w) = 2$ for each $w \in N_{G_2} \setminus V(H)$. 
  These two facts combined imply $\varphi$ satisfies (G2), as required. 

  We finally show that $\varphi$ satisfies (G3) if $(\star)$ holds. Let $v$ be the unique vertex of $P$ with $\delta(v) = 1$, and suppose by contradiction that there is a $\varphi$-monochromatic path $A$
  with ends $u \ne w$ for some $u \in N_G(v) \cap Q$ and $w \in N_G(v) \cap V(K)$.  If $V(A) \subseteq V(G_1)$ then
  we reach a contradiction with (G3) applied to $S_1$. Similarly, if $V(A) \subseteq V(G_2)$ then we reach a contradiction with (G3) applied to $S_2$.    So we can assume
  that the path $A$ intersects both $G_1$ and $G_2 - G_1$ and hence also intersects the path $H$ in at least one vertex. We let $h$ denote the first vertex of $H$ in common with $A$ if one traverses $A$
  from $u$ towards $w$ (possibly $h \in \{u, w\}$)  There are several possibilities to consider. 

  Suppose first that $v \in V(G_1) \setminus  V(H)$. Then $u$ and $w$ are both vertices of $G_1$. In this case, since $A$ intersects $G_1$ and $G_2- G_1$, there must be at least one vertex of $H - h$ that also belongs to $A$. We let $h'$ be the last vertex of $H - h$ in common with $A$ if one traverses $A$ from $u$ towards $w$ (possibly $h' = w$), and we let $B$ be the subpath of $H$ with ends $h$ to $h'$. By ($\star$), $B$ is $\varphi_1$-monochromatic with color $2$; thus we can find a $\varphi_1$-monochromatic path in $G_1$ by color $2$ with ends $u$ and $w$  and vertex set $V(G_1) \cap (V(A) \cup V(B))$, which contradicts (G3) in $S_1$.  The same argument can also be applied whenever
  \begin{itemize}
  \item $v \in V(H)$ and $u$ and $w$ are both vertices of $G_1$;
  \item $v \in V(H)$ and $u$ and $w$ are both vertices of $G_2$. 
  \end{itemize}

  The final case $v \in V(H)$, $u \in V(G_i - G_{3 - i})$ and $w \in V(G_{3 - i} - G_i)$ for some  $i \in \{1, 2\}$ is similar but requires a little more care.  
  Observe that $H$ cannot be a single vertex. Let $z$ denote the neighbor of $v$ on $H$, and note that $z \notin \{u, w\}$. 
  By (G2) applied to $S_1$ (and by assumption of (G3) in case $P \subseteq H$), we get $\varphi_1(z) = 2$.
  It follows that in $(\star)$ we must have $q = v$, so $H - v$ is $\varphi_1$-monochromatic with color $2$. We define
  $D$ to be the subpath of $H$ with ends $h$ and $z$ if $h \not=z$ and by $D = z$ otherwise. Observe then that there is a  $\varphi_i$-monochromatic path in $G_i$ with color $2$ with ends $u$ and $z$ and
  vertex set $V(G_i) \cap (V(A) \cup V(D))$, which contradicts (G3) in $S_i$. This completes the proof of the lemma. 
\end{proof}

A scene $S = (G, P, Q, \delta)$ is \emph{good} if 
\begin{itemize}
  \item $|P| \leq 2$ or,
  \item $|P| = 3$ and $P$ can be labeled $xyz$ so that $\delta(x) = 1$ and $\delta(y) = \delta(z) = 2$. 
\end{itemize}

Theorem \ref{thm:main} will clearly follow (by considering each connected component separately) from the following more general result. 

\begin{theorem}\label{thm:decomp}
  Every good scene $S = (G, P, Q, \delta)$ has a valid coloring.  
\end{theorem}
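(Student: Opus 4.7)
\medskip

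\noindent\textbf{Proof plan.} I would proceed by induction on $|V(G)|$, taking a hypothetical minimum counterexample $S=(G,P,Q,\delta)$ and deriving a contradiction by exhibiting a reducible configuration that can be handled via Lemma~\ref{obs}. The base case is essentially trivial: when $V(G)=P$ the identity map $\delta$ is itself valid (there is nothing to check beyond (G1), which is immediate because $|P|\le 2$ and $G[P]$ is edgeless or a single edge), and when $|V(G)|=|P|+1$ or $V(G)\subseteq P\cup Q$ we can read off the coloring directly from the definitions. So we may assume $|V(G)|$ is strictly larger and that $G$ is $2$-connected, since otherwise we split $G$ at a cut vertex into two smaller scenes (one containing $P$) and apply Lemma~\ref{obs} with $H$ equal to that cut vertex, which is a one-vertex path, making $(\star)$ vacuous.

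The next step is to show that in a minimum counterexample the outer face $K$ has no chord. If $xy$ is a chord of $K$, then $G$ is the union of two induced subgraphs $G_1,G_2$ whose intersection is the two-vertex path $H=xy$; one of them, say $G_1$, contains~$P$. The scene $S_1=(G_1,P,Q\cap V(G_1),\delta)$ has $|P|\le 2$ and is strictly smaller, so by induction it has a valid coloring $\varphi_1$. To set up $S_2$ we use $V(H)$ as the new precolored boundary set (so $|V(H)|=2$) with boundary coloring $\varphi_1\restriction H$; applying induction to $S_2$ produces a $\varphi_2$, and to invoke Lemma~\ref{obs} we need $(\star)$, i.e.\ that $H$ minus one endpoint is $\varphi_1$-monochromatic. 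Since $H$ has only two vertices, this is automatic. The bulk of the work after this reduction consists of analogous splittings along longer paths in the interior of $G$, always arranging $(\star)$ by choosing the splitting so that one endpoint of $H$ carries the ``bad'' color and the rest of $H$ is monochromatic of color $2$ under $\varphi_1$; the freedom for this typically comes from choosing how one precolors $V(H)$ before invoking the inductive hypothesis on~$G_2$.

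The heart of the proof will be the analysis of the outer face $K$ when it has no chord. Here one lists a small set of reducible configurations near $P\cup Q$: for instance, a vertex of $V(K)\setminus(P\cup Q)$ with only one neighbor inside $G-K$, a short path of low-degree boundary vertices adjacent to a common interior vertex, or a specific face of length $4$ or $5$ incident to $P$. Each such configuration is made reducible by deleting a carefully chosen vertex or short path, possibly enlarging $Q$ to force color $2$ on a newly exposed vertex, and appealing to induction on the resulting smaller scene (still with $|P|\le 2$). Standard Euler/discharging in the spirit of Thomassen's list-color method then shows that in a triangle-free plane graph with outer face satisfying our boundary conditions, \emph{some} reducible configuration must appear.

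\medskip

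\noindent\textbf{Expected main obstacle.} The subtlest point will be the interplay between (G3) and (G4). Condition~(G4) forces every boundary vertex coloured $1$ to behave almost like a precoloured vertex from the perspective of future splits, which restricts how we may extend $\varphi_1$ across~$H$; and (G3) requires that after gluing there is no monochromatic path from $Q$ to a neighbour of the uniquely colour-$1$ vertex in~$P$. Guaranteeing both simultaneously when $|P|=2$, one vertex of $P$ has colour $1$, and its unique neighbour in $Q$ sits near the splitting path, is where the choice of reducible configuration has to be finely tuned---for example, by enlarging $H$ to include the relevant $Q$-vertex, or by adding a new vertex to $Q$ in $S_2$ to pre-empt the forbidden monochromatic path. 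I expect most of the proof's length to be spent enumerating and verifying these boundary configurations.
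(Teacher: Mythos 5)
Your setup matches the paper's: minimal counterexample, reduction to the $2$-connected case and to an induced outer cycle by splitting along a cut vertex or a chord and gluing with Lemma~\ref{obs} (and your observation that $(\star)$ is automatic when $H$ has at most two vertices is correct). But the heart of your plan --- ``one lists a small set of reducible configurations \dots\ Standard Euler/discharging \dots\ shows that some reducible configuration must appear'' --- is a placeholder, not an argument: you never say what the configurations are, why each is reducible under the constraints (G1)--(G4), or how the discharging would go. That is precisely where all the difficulty lives, so as it stands there is a genuine gap.

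Moreover, the guessed direction is not the one the paper takes, and knowing the actual mechanism shows why no discharging is needed. Since $Q$ is independent, $|P|\le 2$, and $K$ has length at least $4$ ($G$ is triangle-free), the outer cycle always contains a vertex $x\notin P\cup Q$; the ``reducible configuration'' is just $x$ itself (or $x$ together with a consecutive such vertex $z$). One deletes $x$, colors it $1$, and --- this is the key use of the set $Q$ that your plan does not exploit --- adds the \emph{internal} neighbors of $x$ to $Q$ in the smaller scene, thereby forcing them to color $2$ so that (G1), (G2) and (G4) survive the re-insertion of $x$. For this to be legal the enlarged $Q$ must stay independent, which is exactly what the paper's Lemma~\ref{lem:extended} guarantees (an internal vertex with two non-adjacent neighbors on $K$ has neither of them in $Q$), and that lemma in turn leans on Lemma~\ref{lem:4cycle} (every $4$-cycle bounds a face). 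Neither of these two structural lemmas, nor the idea of using $Q$ as a color-forcing device on the neighbors of a deleted boundary vertex, appears in your proposal; without them the final step cannot be carried out, and with them the case analysis collapses to a short enumeration of which of $s,z,t$ lie in $Q$ rather than an open-ended discharging argument.
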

 
In the rest of this paper, we prove Theorem \ref{thm:decomp}. The proof is by contradiction. 
 
A \emph{counterexample} is a good scene $S = (G, P, Q, \delta)$ without a valid coloring.  We say $S$~is a \emph{minimal counterexample}
if $S$ is a counterexample such that $|V(G)|$ is minimum among all counterexamples. 

We now show, in a series of lemmas, that a  minimal counterexample has a particular structure. 

\begin{lemma}\label{lem:2connected}
  Suppose $S=(G,P, Q, \delta)$ is a minimal counterexample.
  Then $G$ is $2$-connected, the cycle $K$ bounding the outer face of $G$ is induced if $|P| \leq 2$, and every chord of $K$ has one end being the middle vertex of $P$ if $|P| = 3$. 
\end{lemma}

\begin{proof}
  Note that $G$ is connected by the definition of a scene. 

  Let $H$ consist of either a single vertex or an edge of $G$ with no end being the middle vertex of $P$ if $|P| = 3$, and $G_1$ and $G_2$ be (induced) subgraphs of $G$ such that $G = G_1 \cup G_2$ and $V(G_1) \cap V(G_2) = V(H)$.  Let us first consider the case $|P| = 3$ and $H$ is a single vertex being the middle vertex of $P$. For $i \in \{1, 2\}$, let $P_i = V(G_i) \cap P$; the scene $S_i = (G_i, P_i, Q \cap V(G_i), \delta \restriction P_i)$ has, by the minimality of $S$, a valid coloring $\varphi_i$. Clearly, $\varphi = \varphi_1 \cup \varphi_2$ is valid for $S$, a contradiction. 

  In all other cases, we may assume $P \subseteq V(G_1)$.  Let $S_1 = (G_1, P, Q \cap V(G_1), \delta)$.  By the minimality of $S$, $S_1$ has a valid coloring $\varphi_1$.  
  Let $S_2 = (G_2, V(H), Q \cap V(G_2) \setminus V(H), \varphi_1 \restriction H)$. By the minimality of $S$ again, $S_2$ has a valid coloring $\varphi_2$. 
  By Lemma \ref{obs},  the coloring $\varphi = \varphi_1 \cup \varphi_2$ is valid for $S$, which is a contradiction. This completes the proof.
\end{proof}

A vertex of a plane graph $G$ is \emph{internal} if it is not incident with the outer face of $G$. 

\begin{lemma}\label{lem:4cycle}
Suppose $S=(G,P, Q, \delta)$ is a minimal counterexample.
Then every $4$-cycle in $G$ bounds a face. 
\end{lemma}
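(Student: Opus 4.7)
The plan is to assume for contradiction that some 4-cycle $C = v_1v_2v_3v_4$ of $G$ does not bound a face, and to derive a valid coloring for $S$. Since $G$ is 2-connected (Lemma \ref{lem:2connected}) and triangle-free, $C$ has no chord, so $\{v_1,v_3\}$ and $\{v_2,v_4\}$ are each independent in $G$. The Jordan curve $C$ splits the plane into two regions, both nonempty by assumption; let $O$ denote the vertex set of the region containing the outer face (hence $P, Q \subseteq V(C) \cup O$) and let $I$ denote the other (interior) vertex set. Set $G_1 = G[V(C) \cup O]$ and $G_2 = G[V(C) \cup I]$, so $G = G_1 \cup G_2$ with $V(G_1) \cap V(G_2) = V(C)$, and $P \subseteq V(G_1)$. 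Since both $I$ and $O$ are nonempty, $|V(G_i)| < |V(G)|$ for $i \in \{1,2\}$, so minimality applies to any scene on $G_i$ with at most two prescribed vertices.

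First, apply minimality to $S_1 = (G_1, P, Q \cap V(G_1), \delta)$ to obtain a valid coloring $\varphi_1$. Because $\varphi_1^{-1}(2)$ is a forest but $V(C)$ spans the 4-cycle $C$, the coloring $\varphi_1$ must give color $1$ to at least one vertex of $V(C)$; symmetrically, it gives color $2$ to at least one. In particular each color class of $\varphi_1$ on $V(C)$ induces a proper subgraph of $C$, hence a disjoint union of paths of total length at most three.

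The heart of the argument is to construct a scene $S_2 = (G_2, P_2, Q_2, \delta_2)$ with $|P_2| \leq 2$ whose valid coloring $\varphi_2$ (obtained by minimality) agrees with $\varphi_1$ on all of $V(C)$, so that $\varphi := \varphi_1 \cup \varphi_2$ is well-defined. I would perform a short case analysis on the pattern of $\varphi_1|_{V(C)}$: up to rotation of $C$ and swapping colors, only three patterns are possible, namely one vertex of color $1$ and three of color $2$, two adjacent vertices of each color, and alternating colors. In each case I place a pair of consecutive vertices of $C$ into $P_2$ with $\delta_2$ inherited from $\varphi_1$, and possibly one further vertex of $V(C)$ into $Q_2$ (always choosing a vertex non-adjacent on $C$ to those in $P_2$, so that $Q_2$ is independent in $G$ thanks to the absence of chords). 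I then argue that any valid $\varphi_2$ of $S_2$ is forced to agree with $\varphi_1$ on the at most one remaining unconstrained vertex of $V(C)$, because the alternative coloring at that vertex would immediately violate one of (G1)--(G4) inside $S_2$, using the forest and degree conditions together with the chordlessness of $C$.

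With $\varphi_1$ and $\varphi_2$ aligned on $V(C)$, Lemma \ref{obs} immediately yields (G2) and (G4) for the combined coloring $\varphi$. Conditions (G1) and (G3) cannot be obtained from the lemma directly, since the shared subgraph $H = C$ is a cycle rather than a path, and so hypothesis $(\star)$ does not apply; I would verify them directly. For (G1), any monochromatic cycle of $\varphi$ would have to meet $V(C)$ and, cutting along $C$, would produce a monochromatic cycle in either $\varphi_1$ or $\varphi_2$, contradicting the fact that the corresponding color class induces a forest on that side. A similar reduction handles any putative violation of (G3). The main obstacle, and the delicate part of the proof, is thus step three: the careful choice of $(P_2,Q_2,\delta_2)$ in each color pattern that makes the minimality-supplied $\varphi_2$ compatible with $\varphi_1$ on every vertex of $V(C)$. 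The structural lever making this possible is the observation above that, in each pattern, at least one color class of $\varphi_1|_{V(C)}$ is a path of length at most two in $C$, which effectively plays the role of the ``path'' hypothesis $(\star)$ of Lemma \ref{obs}.
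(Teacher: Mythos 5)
Your setup---splitting $G$ along the non-facial $4$-cycle $C$ into an outer part $G_1$ and an inner part $G_2$, colouring $G_1$ first by minimality, and observing that both colours must appear on $V(C)$---matches the paper. The proof breaks down, however, at its central step: the claim that you can choose $P_2$ (two consecutive vertices of $C$) and $Q_2$ (one further vertex of $C$) so that \emph{every} valid colouring of $S_2=(G_2,P_2,Q_2,\delta_2)$ is forced to agree with $\varphi_1$ on the remaining vertex of $C$. No such forcing mechanism exists. The axioms (G1)--(G4) only ever \emph{forbid} colourings; they never force a vertex outside $P_2\cup Q_2$ to receive colour $1$, and membership in $Q_2$ is the only way to force colour $2$. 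Concretely, take the alternating pattern $\varphi_1(x)=\varphi_1(y)=1$, $\varphi_1(s)=\varphi_1(t)=2$ on the cycle $C=xsyt$: after placing, say, $x$ and $s$ in $P_2$ and $t$ in $Q_2$, the vertex $y$ is unconstrained, and a valid $\varphi_2$ may perfectly well set $\varphi_2(y)=2$ (colour $2$ carries no degree restriction, and $\{s,y,t\}$ need not close a monochromatic cycle inside $G_2$). The same obstruction occurs in every pattern: at least one vertex of $C$ that $\varphi_1$ colours $1$ can be placed neither in $P_2$ (already full) nor in $Q_2$ (which would force colour $2$), so its colour under $\varphi_2$ is genuinely free.

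The paper circumvents exactly this difficulty by a different device: it picks a vertex $y\in V(C)$ with $\varphi_1(y)=1$, \emph{deletes} it from $G_2$, places all internal neighbours of $y$ (together with those colour-$2$ vertices of $C$ not needed in the precoloured set) into the independent set $Q'$, colours $G_2-y$ by minimality, and then re-inserts $y$ with colour $1$ by hand; this is safe because every neighbour of $y$ in $G_2$ has been forced to colour $2$, so $y$ is isolated in the colour-$1$ subgraph of $G_2$. Your sketch is also too quick on (G1) and (G3) for the glued colouring: a monochromatic cycle of $\varphi$ crossing $C$ decomposes into monochromatic \emph{paths} of $G_1$ and of $G_2$, not cycles, and ruling these out requires invoking (G3) of the sub-scene (with $v=x$, $u=s$, $w=t$) to guarantee that $s$ and $t$ lie in distinct components of the colour-$2$ forest of $\varphi_2$---which in turn depends on setting up $P_2$ and $Q_2$ exactly as the paper does. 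Finally, swapping the colours is not a symmetry of the problem (colour $1$ carries the degree bound and $Q$-vertices must receive colour $2$), so your reduction to three colour patterns undercounts: the pattern with three vertices of colour $1$ on $C$ must be handled as a separate case, as the paper does.
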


\begin{proof}
Suppose for a contradiction that there is a $4$-cycle $C$ that does not bound a face, $G_2$ is the subgraph of $G$ drawn in the closed disk bounded by $C$, and $G_1 = G - (V (G_2) \setminus V (C))$. 
 Note that $P \subseteq V(G_1)$.  Let $S_1$ be the scene $(G_1, P, Q \cap V(G_1), \delta)$.  By the minimality of $S$, $S_1$ has a valid coloring $\varphi_1$. Let $x$, $s$, $y$, $t$ denote the vertices of $C$ in clockwise order. Note by (G1) that $\varphi_1(V(C)) = \{1, 2\}$.  Assume without loss of generality that $\varphi_1(y) = 1$. We consider all possible colorings (up to symmetry) of $V(C)$ by $\varphi_1$. 

 Suppose first $\varphi_1(s) = \varphi_1(t) = 2$. Let $G_2' = G_2 - \{y\}$, let $Y$ be the set of internal vertices of $G_2$ that are neighbors of $y$, and let $Q' = \{s, t\} \cup Y$. Since $G$ is
  triangle-free, $Q'$ is an independent set. Let $S'_2$ be the scene $(G'_2, \{x\}, Q', \varphi_1 \restriction \{x\})$, and note by the minimality of $S$ that  $S'_2$ has a valid coloring $\psi_2$. We
  define a coloring $\varphi_2$ of $S_2$ by $\varphi_2 \restriction G'_2 = \psi_2$ and $\varphi_2(y) = 1$. 
  Clearly, $\varphi_2$ is valid for $S_2$ (as defined in Lemma~\ref{obs}): the graph induced by~$\varphi_2^{-1}(1)$ differs from 
  the one induced by~$\psi^{-1}(1)$ by adding an isolated vertex. 
 Let  $\varphi$  be a coloring for $S$ defined by $\varphi = \varphi_1 \cup \varphi_2$. We claim that $\varphi$ is valid, which is a contradiction.  By Lemma \ref{obs}, $\varphi$ satisfies (G2) and (G4). 
 
To show that $\varphi$ satisfies (G1), observe that $\varphi^{-1}_2(1)$ induces a forest with maximum degree at most  three where $x$ is an isolated vertex by (G2) and $y$ is an isolated vertex by definition.  Therefore, $\varphi^{-1}(1)$ induces a forest with maximum degree at most three. 

In the other case, the sets $\varphi_1^{-1}(2)$ and $\varphi_2^{-1}(2)$ each induce  a forest and have two vertices, $s$ and $t$, in common. If $\varphi_1(x) = 2$, then 
  $V(G_1) \cap V(G_2) \cap \varphi^{-1}(2) = \{x, s, t\}$ implies $\varphi^{-1}(2)$ induces a forest. 
  On the other hand, if $\varphi_1(x) = 1$, (G3) with $v =x$, $u =s$ and $w = t$ applied to $S'_2$ ensures that $s$ and $t$ are in
  distinct components of the forest induced by $\varphi_2^{-1}(2)$. Thus $\varphi_1^{-1}(2) \cup \varphi_2^{-1}(2)$ again  induces a forest, establishing (G1). 
 
Suppose for a contradiction that (G3) does not hold in $\varphi$, that is, there is a $\varphi$-monochromatic path $A$ by color 2 in $G$ as in the statement of (G3). Since $\varphi_1$ is valid,   $A$ is not a path of $G_1$. Thus,  for $A$ to be a path in $G$, the only possibility is if $\varphi_1(x) = 1$ and  there is a $\varphi_2$-monochromatic subpath of $A$ in $G_2$ with ends $s$ and $t$, which is impossible by the previous paragraph. This establishes (G3).

By symmetry it remains to consider the following cases:
\begin{itemize}
\item  $\varphi_1(s) = \varphi_1(x) = 1 \not= \varphi_1(t) = 2$;
\item $\varphi_1(s) = 1 \not= \varphi_1(x) = \varphi_1(t) = 2$. 
 \end{itemize} 
 
We address both cases simultaneously. 
Let $G_2' = G_2 - \{y\}$, let $Y$ be the set of internal vertices of $G_2$ that are neighbors of $y$, and let $Q' = \{t\} \cup Y$. Since $G$ is triangle-free, $Q'$ is an independent set. 
Let $S'_2 = (G'_2, \{x, s\}, Q', \varphi_1 \restriction \{x, s\})$. By the minimality of $S$,  $S'_2$ has a valid coloring $\psi_2$. 
We define a coloring $\varphi_2$ of $S_2$ by $\varphi_2 \restriction G'_2 = \psi_2$ and $\varphi_2(y) = \varphi_1(y)$.  
Let $\varphi$ be a coloring for $S$ defined by $\varphi = \varphi_1 \cup \varphi_2$.  
  By the construction and by~(G2) applied to $S'_2$ we get the following that will be used repeatedly: 
\begin{itemize}
  \item[\normalfont{(A)}] Vertices in $V(H) = \{x,s,y,t\}$ of color~1 have no neighbors of color~1 in~$V(G_2) \setminus V(G_1)$. 
\end{itemize}
  Another easy observation that holds just by assumptions on colors of $V(H)$: 
\begin{itemize}
  \item[\normalfont{(B)}] Any path in $G_2 \cap \varphi^{-1}(2)$ can be rerouted along the boundary of~$K_2$, that is along $V(H)$
    (and still it will be a monochromatic path of color~2). The same is true for color~1.
\end{itemize}
Now to verify that the constructed coloring~$\varphi$ is valid for~$S$: 
  For (G1) we use claim~(B) above to show that both color sets induce forests. To show that $\varphi^{-1}(1)$ has degrees bounded by~3 we use claim~(A). 
  For (G2): if $v \notin V(H)$ we only use that $\varphi_1$ is valid for~$S_1$. 
  If $v \in V(H)$ we also need to use claim~(A). 
  To show (G3) it is enough to use claim~(B) in the same way as above. 
  Finally, (G4) follows directly from claim~(A). 
This completes the proof. 
\end{proof}

\begin{lemma}\label{lem:extended}
  Suppose $S=(G,P, Q, \delta)$ is a minimal counterexample, $K$ is the cycle bounding the outer face of $G$, and $x$ is an internal vertex of $G$ having two non-adjacent neighbors $s$ and $t$ in $K$
  distinct from the middle vertex of $P$ if $|P| = 3$. Then $\{s, t\} \cap Q = \emptyset$. 
\end{lemma}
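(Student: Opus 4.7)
The plan is to assume, for contradiction, that $s \in Q$; the case $t \in Q$ is symmetric. Let $K_1$ and $K_2$ be the two $s$-$t$ arcs of $K$, each of length at least $2$ since $s$ and $t$ are non-adjacent on $K$. The path $H = s\text{-}x\text{-}t$ together with $K_i$ bounds a closed disk, and I let $G_i$ be the subgraph of $G$ drawn in that disk, so that $G = G_1 \cup G_2$, $G_1 \cap G_2 = H$, and $|V(G_i)| < |V(G)|$ (each $K_i$ has an interior vertex). Since $|P| \leq 2$ with $P$ consecutive in $K$ and $s \notin P$, I may label the arcs so that $P \subseteq V(G_1)$.

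By minimality of $S$, the scene $S_1 = (G_1, P, Q \cap V(G_1), \delta)$ has a valid coloring $\varphi_1$; in particular $\varphi_1(s) = 2$. The direct split along $H$ would make $|V(H)| = 3$ and prevent applying Lemma~\ref{obs}, so I instead absorb $s$ into the $Q$-set of the second scene by setting
\[
S_2 = \bigl(G_2,\ \{x,t\},\ (Q \cap V(G_2)) \setminus \{t\},\ \varphi_1 \restriction \{x,t\}\bigr).
\]
Here $|P_2|=2$, the new $Q_2$ is independent in $G_2$ (being a subset of the independent set $Q$), and any valid coloring $\varphi_2$ of $S_2$ satisfies $\varphi_2(s) = 2 = \varphi_1(s)$. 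Minimality provides such a $\varphi_2$, and the combined map $\varphi = \varphi_1 \cup \varphi_2$ is a well defined $\{1,2\}$-coloring of $G$ with $\varphi \restriction P = \delta$ and $\varphi(Q) = \{2\}$. The contradiction will come from showing that $\varphi$ is valid for $S$.

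Conditions (G2), (G4), and the degree-three part of (G1) follow by routine combination of the corresponding conditions in $S_1$ and $S_2$: every neighbor of a vertex of $\{s,x,t\}$ lying in $V(G_2) \setminus V(G_1)$ is blocked from color $1$ by (G2) in $S_2$ whenever $\delta_2$ forces color $1$. The step I expect to be the obstacle is the forest requirement on $\varphi^{-1}(2)$. The only dangerous sub-case is $\varphi_1(x) = 1$ and $\varphi_1(t) = 2$: then $s,t$ are both color $2$, $V(G_1) \cap V(G_2) \cap \varphi^{-1}(2) = \{s,t\}$, and no edge of $G$ joins $s$ and $t$, so concatenating a color-$2$ $s$-$t$ path in $G_1$ with one in $G_2$ would close a cycle. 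This is exactly where the design of $S_2$ is calibrated: $x$ is then the unique vertex of $P_2$ with $\delta_2(x) = 1$, its neighbor $s$ lies in $Q_2$, and $t \in N_{K_{G_2}}(x) \setminus \{s\}$, so (G3) applied to $S_2$ forbids any $\varphi_2$-monochromatic $s$-$t$ path, placing $s$ and $t$ in different color-$2$ components of $\varphi_2$ and killing the cycle. Finally, (G3) for $\varphi$ in $S$ is verified by the same case analysis used in the proof of Lemma~\ref{obs}, now with $H = s\text{-}x\text{-}t$ playing the role of the intersection path.
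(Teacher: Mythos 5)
Your proposal is correct and follows essentially the same route as the paper: the same split of $G$ along the path $sxt$, the same second scene $S_2 = (G_2, \{x,t\}, Q\cap(V(G_2)\setminus\{t\}), \varphi_1\restriction\{x,t\})$ keeping $s$ in the $Q$-set, and the same use of (G3) in $S_2$ (with $v=x$, $u=s$, $w=t$) to separate $s$ and $t$ in the colour-$2$ forest in the critical sub-case $\varphi_1(x)=1$, $\varphi_1(t)=2$. The only quibble is that the direct split is blocked because a scene with $|P|=3$ falls outside the minimality hypothesis of Theorem \ref{thm:decomp}, not because Lemma \ref{obs} itself fails to apply.
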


\begin{proof}
  Suppose for a contradiction that this is not the case. The closure of the complement of~$K$ contains all of the graph~$G$. It is divided by the path~$sxt$ to two regions. 
  We let the graphs induced be these two regions be~$G_1$ and~$G_2$, with the path $sxt$ contained in both of them. We choose the labeling so that $V(P) \subseteq V(G_1)$. 
  As $s$, $t$ are nonadjacent, both $G_1$ and~$G_2$ are induced subgraphs. 
  Suppose for a contradiction (by symmetry) that $s \in Q$. Let $S_1$ be the scene $(G_1, P, Q \cap V(G_1), \delta)$.  
  By the minimality of $S$, $S_1$ has a valid coloring $\varphi_1$. Let $S_2 = (G_2, \{x, t\}, Q \cap (V(G_2) - \{t\}), \varphi_1 \restriction \{x, t\})$. By the
  minimality of $S$, $S_2$ has a valid coloring $\varphi_2$. We claim that $\varphi = \varphi_1 \cup \varphi_2$ is valid for $S$, which is a contradiction. By Lemma \ref{obs}, $\varphi$ satisfies (G2) and
  (G4). Moreover, by the same lemma  with $q = s$, $\varphi$ is valid if $\varphi_1(x) = 2$ or if $\varphi_1(x) = 1$ and $\varphi_1(t) = 1$.

  We finally show that (G1) and (G3) also hold if $\varphi_1(x) = 1$ and $\varphi_1(t) = 2$. To establish (G1), note by (G4) applied to $S_1$ that $\varphi^{-1}_1(1)$ induces a forest $F_1$ with maximum degree at most three where $d_{F_1}(x) \leq 2$ and by (G2) applied to $S_2$ that  $\varphi^{-1}_2(1)$ induces a forest $F_2$ with maximum degree at most three where  $d_{F_2 - F_1}(x) = 0$. Thus, $\varphi^{-1}(1)$ induces a forest with maximum degree at most three.  
 
  Finally,  $\varphi_1^{-1}(2)$ and $\varphi_2^{-1}(2)$ each induce  a forest; moreover  
  (G3) for $S_2$ with $v = x$, $u = s$ and $w = t$ ensures that $s$ and $t$ are in distinct components of the forest induced by $\varphi_2^{-1}(2)$.  Thus $\varphi^{-1}(2)$ induces a forest, which proves (G1) and, by the same argument, also (G3).
\end{proof}

We are now ready to prove Theorem \ref{thm:decomp}. 

\begin{proof}[Proof of Theorem \ref{thm:decomp}]
Suppose for a contradiction that this is not the case. Then there exists a counterexample, and also a minimal one, say $S = (G, P, Q, \delta)$. Let $K$ be the cycle bounding the outer face of $G$. By Lemma \ref{lem:2connected}, if $|P| \leq 2$ then $K$ is induced and if $|P| = 3$ then every chord of $K$ is incident with the middle vertex of $P$. 

Let us first consider the case $|V(K)| = 4$.  By Lemma \ref{lem:4cycle}, $G = K = C_4$ and it is easy to extend $\delta$ to a coloring $\varphi$ of $G$ that is valid for $S$.

  So we can assume that $|V(K)| \geq 5$. Since $Q$ is an independent set and $|P| \leq 3$, we can let $r, s, x, z, t$ be vertices consecutive on $K$ with $x \in V(K) \setminus (P \cup Q)$. 
  We distinguish a number of special cases first. 

\bigskip

\bigskip

\parbox{15cm}{\textbf{Case 1:} \emph{$s, z \in Q$ or  $s \in Q$ and $z \in P$  with $\delta(z) = 2$ (or, symmetrically, $z \in Q$ and $s \in P$ with $\delta(s) = 2$)}. In this case, let $G' = G - x$, let
  $X$ be the set of internal vertices of $G$ that are neighbors of $x$, and let  $Q' = X \cup Q$. By Lemma \ref{lem:extended}, $Q'$ is an independent set of $G'$. Thus, the scene $S' = (G', P, Q', \delta)$
  has, by minimality of $S$, a valid coloring $\psi$. Define the coloring $\varphi$ of $S$ by $\varphi(x) = 1$ and $\varphi \restriction G' = \psi$. 
  We only added a single isolated vertex to the induced forest of color 1, thus $\varphi$ is valid for $S$, which is a contradiction. }

\bigskip

\bigskip

\parbox{15cm}{\textbf{Case 2:} \emph{$s \in Q$, $z \not\in P \cup Q$ and $t \not\in Q$}. In this case, let $G' = G - x$ and $X = N_G(x) - s$. Let $Q' = Q \cup X$. By Lemma \ref{lem:extended}, $Q'$ is an
  independent set of $G'$. Thus, the scene $S' = (G', P, Q', \delta)$ has, by minimality of $S$, a valid coloring $\psi$. Define the coloring $\varphi$ of $S$ by $\varphi(x) = 1$ and $\varphi \restriction
  G' = \psi$. Again by the same argument, $\varphi$ is valid for $S$, which is a contradiction.}

\bigskip

\bigskip

\parbox{15cm}{\textbf{Case 3:} \emph{$s, z \not\in P \cup Q$ and $r, t \not\in Q$}. In this case, we apply the argument from the preceding case with $G' = G - x$ and $X = N_G(x)$. }

\bigskip

\bigskip

\parbox{15cm}{\textbf{Case 4:} \emph{$ s\in Q$, $z \not\in P \cup Q$ and $t \in Q$}. In this case, let $G^* = G - \{x, z\}$, and let $Z$ be the set of internal vertices of $G$ that are neighbors of  $x$ or
  $z$. The graph induced by $Z \cup \{s, t\}$ has, by Lemma \ref{lem:4cycle}, at most one edge~$ab$ and, by Lemma \ref{lem:extended}, $\{a, b\} \subseteq Z$. Let $Q^* = Q \cup (Z - \{a\})$, and note by
  Lemma \ref{lem:extended} that $Q^*$ is an independent set. Thus, $S^* = (G^*, P, Q^*, \delta)$ has, by the minimality of $S$, a valid coloring $\psi$. Define the coloring $\varphi$ of $S$ by 
  $\varphi(x) = \varphi(z) = 1$ and $\varphi \restriction G^* = \psi$. We have added an isolated edge to the forest of color~1. So, again, $\varphi$ is valid for $S$, which is a contradiction.} 

\bigskip

\bigskip

\noindent
In the remainder of the proof, we will assume that none of Cases 1--4 apply. Then there are no two consecutive vertices $V(K) - (P \cup Q)$; for suppose that there are two consecutive vertices $u, v \in
  V(K) - (P \cup Q)$, and  let $a$ and $b$ be the other neighbors of $u$ and $v$, respectively, on $K$. If $\{a, b\} \subseteq Q$, then  Case 4 applies. If $a \not\in Q$ and $b \in Q$, then Case~2 applies
  (with $s = b$, $x = v$, $z = u$ and $t = a$). Therefore, by symmetry, we can assume that $\{a, b\} \cap Q = \emptyset$ and thus Case 3 applies. 

So we can assume from now on that there are no two consecutive vertices in $V(K) - (P \cup Q)$. Then Case~1 applies unless $s \in Q$ and $z \in P$ with $\delta(z) = 1$. 
  In such a case, we choose a different $x$, namely the other neighbor of~$S$. This choice works, except in the following two cases. 
\begin{itemize}
\item[(i)] $|V(K)| = 5$, $P = \{r, s\}$ with $\delta(r) = \delta(s) = 1$, $z \in Q$ and $t \not\in P \cup Q$, or
\item[(ii)] $|V(K)|= 5$, $|P| = 3$ with $\delta(s) = 1 \not=\delta(r) = \delta(t) = 2$ and $z \in Q$. 
\end{itemize}

Let us first consider (i). Let $R$ be the set of internal neighbors of $r$, and let $Q' = R \cup \{t\}$. Since $G$ is triangle-free, $Q'$ is an independent set.  Let $\delta'$ be the coloring of $\{s, x, z\}$ defined by $\delta'(s) = 1 \not= \delta'(x) = \delta'(z) = 2$. Then $(G - \{r\}, \{s, x, z\}, Q', \delta')$ is a good scene and, by the minimality of $S$, has a valid coloring $\psi$. The coloring $\varphi$ of $G$ defined by $\varphi \restriction (G - r) = \psi$ and $\varphi(r) = \delta(r) = 1$ is valid for $S$, a contradiction. 

It remains to consider (ii). Let $T$ be the set of internal neighbors of $s$, and let $Q' = T \cup \{x, r\}$. Since $G$ is triangle-free, $Q'$ is an independent set.  Let $\delta'$ be a coloring of $\{z, t\}$ defined by $\delta'(z) = \delta'(t) = 2$. Then the scene $(G - s, \{z, t\}, Q', \delta')$ has, by the minimality of $S$, a valid coloring $\psi$. The coloring $\varphi$ of $G$ defined by $\varphi \restriction (G - s) = \psi$ and $\varphi(s) = \delta(s) =  1$ is valid for $S$, which is our final contradiction. The theorem is proved. 
\end{proof}

\section*{Acknowledgments}

This work was supported by grant 22-17398S of the Czech Science Foundation and by Agence
Nationale de la Recherche (France) under research grant ANR DIGRAPHS ANR-19-CE48-0013-01.

\bibliography{bibliography}{}
\bibliographystyle{abbrv}
 
\end{document}